\begin{document}

\mainmatter

\title{Edge-dominating cycles, $k$-walks and Hamilton prisms in $2K_2$-free graphs}
\titlerunning{Dominating cycles, $k$-walks and Hamilton prisms in $2K_2$-free graphs}

\author{Gao Mou\inst{1} \and Dmitrii V. Pasechnik\inst{2}}
\institute{School of Physical and Mathematical Sciences, Nanyang Technological University, Singapore,\\
\email{gaom0002@e.ntu.edu.sg} \and
Department of Computer Science, The University of Oxford, UK,\\
\email{dimpase@cs.ox.ac.uk}}

\maketitle

\begin{flushright}
    \textit{To the memory of Sergei Duzhin.}
\end{flushright}

\begin{abstract}
We show that an edge-dominating cycle in a $2K_2$-free graph can be found
in polynomial time; this implies that every $\frac{1}{k-1}$-tough
$2K_2$-free graph admits a $k$-walk, and it can be found in polynomial time.
For this class of graphs, this proves a
long-standing conjecture due to Jackson and Wormald (1990).
Furthermore, we prove that for any $\epsilon>0$ every $(1+\epsilon)$-tough $2K_2$-free graph is prism-Hamiltonian
and give an effective construction of a Hamiltonian cycle in the corresponding prism,
along with few other similar results.
\end{abstract}

\section{Introduction}
A graph $G$ is called $\beta$-{\em tough}, for a real $\beta>0$, if for any $p\geq 2$ it
cannot be split into $p$ components by removing less than $p\beta$ vertices.
This concept, a measure of graph connectivity and ``resilience'' under vertex subsets removal,
was introduced in 1973 by Chv\'{a}tal \cite{chvatal1973tough},
while studying   Hamiltonicity of graphs. For a survey of results on graph toughness till 2006
see \cite{MR2221006}.

In general, toughness of a graph is NP-hard to compute
\cite{MR1074858}. Considerable work went into investigating this computational problem for
various classes of graphs. In particular, recently, Broersma, Patel and Pyatkin proved
\cite{broersma2014toughness} that toughness of a $2K_2$-free graph, i.e. a graph that
does not contain an induced copy of the disjoint union of two edges,
can be found in polynomial time.

Note that $2K_2$-free graphs are an interesting
class from algorithmic complexity point of view; most classical algorithmic problems for them are
hard, with a notable exception of the maximum weighted independent set problem
\cite{balasyu1989}, \cite[Graphclass: $2K_2$-free]{isgci}. In particular
Hamiltonian cycle problem is NP-complete already for a subclass of $2K_2$-free graphs,
the {\em split} graphs---graphs for which the set of vertices can be partitioned into
a clique and an independent set \cite[Exercise 6.2]{Golum}.
Due to the latter,  for $2K_2$-free graphs it makes sense to study computational complexity
of concepts which are generalisations of the Hamiltonian cycle
problem, such as $k$-walk.

Let $p\times G$ denote the multigraph obtained from $G$ by taking each edge $p$ times.
A $k$-{\em walk} is a spanning subgraph $W$ of $2k\times G$ such that each vertex of $W$
has even degree at most $2k$. 
In particular a graph has a $1$-walk if and only if it is $K_2$ (i.e. one edge) or Hamiltonian.
{For a survey of results on walks in graphs till 2005 see \cite{kouider2005connected}.}
In 1990 Jackson and Wormald conjectured \cite{jackson1990k} that for any integer $k\ge2$ a
$\frac{1}{k-1}$-tough graph $G$ admits a $k$-walk.

In this paper, we prove that Jackson and Wormald's Conjecture is true under the
assumption that $G$ is  $2K_2$-free.

\begin{theorem}\label{thm2}
For any integer $k\ge2$, every $\frac{1}{k-1}$-tough $2K_2$-free graph $G$
admits a $k$-walk.
Moreover, the latter can be found in time polynomial in $|V(G)|$.
\end{theorem}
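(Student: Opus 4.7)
The plan follows the two-step reduction announced in the abstract. The first, decisive step is to construct an \emph{edge-dominating cycle} $C$ in $G$, meaning a cycle of $G$ such that $I := V(G) \setminus V(C)$ is an independent set. This structural and algorithmic result for $2K_2$-free graphs is the paper's main contribution, and I expect its proof to proceed by taking an extremal cycle (longest, or longest under some secondary condition) and using the forbidden induced $2K_2$ to rule out configurations obstructing edge-domination. The corner case of a graph without any cycle is handled separately: a connected $2K_2$-free forest is a star $K_{1,m}$, and $\frac{1}{k-1}$-toughness forces $m \le k-1$, so traversing each edge twice already gives the desired $k$-walk.

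With $C$ in hand, toughness is converted into a capacitated matching. In the bipartite graph between $I$ and $V(C)$ formed by edges of $G$, I seek an assignment $f\colon I \to V(C)$ with $f(v) \in N_G(v)$ and $|f^{-1}(u)| \le k-1$ for every $u \in V(C)$. By König / max-flow (the $(k-1)$-capacitated version of Hall's theorem), such $f$ exists if and only if
\[ |N_G(S) \cap V(C)| \ge \frac{|S|}{k-1} \qquad \text{for every } S \subseteq I. \]
If some $S$ violates this, set $T := N_G(S) \cap V(C)$. Because $I$ is independent, every vertex of $S$ is isolated in $G - T$, so $c(G-T) \ge |S|$, with a strict inequality whenever $V(C)\setminus T$ or $I \setminus S$ is non-empty. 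In every non-trivial case $T$ is then a separator and $(k-1)|T| < |S| \le c(G-T)$ contradicts $\frac{1}{k-1}$-toughness.

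The $k$-walk $W$ is then assembled by taking each edge of $C$ once and each edge $\{v,f(v)\}$, $v \in I$, twice, viewed as a subgraph of $2k \times G$. All degrees are even: each $v \in I$ has degree $2$, and each $u \in V(C)$ has degree $2 + 2|f^{-1}(u)| \le 2k$. Since $C$ is connected and every $v \in I$ is attached to $C$ at $f(v)$, $W$ spans and connects $G$, hence is a $k$-walk. Each step is polynomial: the edge-dominating cycle by Step~1, the capacitated matching by standard max-flow, and the final assembly in linear time.

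The principal obstacle is Step~1. In general graphs the edge-dominating cycle problem is NP-hard, so the proof must exploit $2K_2$-freeness essentially; by contrast, the reduction from the cycle to the walk in Steps~2--3 is a fairly standard toughness argument in the spirit of Jackson--Wormald.
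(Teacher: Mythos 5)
Your Steps 2--3 are exactly the paper's reduction: the capacitated Hall/toughness computation and the assembly of the $k$-walk by doubling the matching edges reproduce the paper's Lemma~\ref{addtec} essentially verbatim, and that part of your argument is correct. However, there are two genuine problems. First, Step 1 is not a proof but a forecast: you assert that an edge-dominating cycle exists and can be found, and ``expect'' an extremal-cycle argument to work. This is the entire content of the paper's Theorem~\ref{addgen1} and is where all the work lies; the naive ``take a longest cycle'' plan does not directly give an algorithm (Veldman's 1983 existence proof is by contraposition and yields no construction). The paper instead runs an explicit iteration with a two-part potential: given a cycle $C$ and an edge $v_1v_2$ disjoint from it, a case analysis driven by $2K_2$-freeness either splices $v_1$ (and possibly $v_2$) into $C$ to lengthen it, or, in one residual case, exchanges a cycle vertex for $v_1$, keeping the length but strictly decreasing the number of undominated edges; this terminates in at most $|E(G)|^2$ steps, which is what gives the polynomial bound. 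Without this (or some substitute) your proof of Theorem~\ref{thm2} is incomplete precisely at what you yourself call its decisive step.

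Second, your corner case contains a false claim: a connected $2K_2$-free forest need not be a star. The path $P_4$, and more generally any double star (two adjacent centres, each with pendant leaves), is $2K_2$-free, because any two disjoint edges in it are joined through the central edge, so they never induce a $2K_2$. Hence ``$G=K_{1,m}$ with $m\le k-1$'' does not cover all acyclic cases. The repair is the one the paper uses: a $2K_2$-free tree has an edge-dominating \emph{vertex or edge}, and Lemma~\ref{addtec} (equivalently, your own Hall argument with $V(C)$ replaced by the one or two dominating vertices) handles these cases, since toughness bounds the number of leaves attached to each centre by $k-1$. Neither issue changes the overall architecture, which coincides with the paper's, but both must be fixed for the proof to stand.
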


If for $k\geq 2$ we let the toughness value $\frac{1}{k-1}$ increase to
$\frac{1}{k-2}$ then
one does not need $2K_2$-freeness. Indeed, it is shown in
\cite{jackson1990k} that
every $\frac{1}{k-2}$-tough graph has a $k$-walk.

Clearly, if $G$ is
Hamiltonian, then $G$ is 1-tough.  More generally,
if $G$ has a $k$-walk, then $G$ is $\frac{1}{k}$-tough \cite{jackson1990k}.
However, the converse is not true already for $k=1$ (there even exist $2$-tough graphs which are
not Hamiltonian, cf. \cite{bauer2000not}).

This more or less summarises the situation with $t$-tough graphs, $t\leq 1$.
On the $t>1$ side
a famous conjecture of Chv\'{a}tal \cite{chvatal1973tough} claims
that there exists a constant $\beta$ such that every
$\beta$-tough graph is Hamiltonian.
Towards this,
Ellingham and Zha \cite{ellingham2000toughness} proved that
every 4-tough graph has a 2-walk (cf. Theorem~\ref{thm1} below).

It was recently shown \cite{broersma2014toughness} that
every 25-tough 2$K_2$-free graph on at least three vertices is Hamiltonian.
Our Theorem~\ref{thm2} was inspired by this result.
However, our approach is technically quite different.

\medskip

Our next result concerns a structure that is half-way between 1- and 2-walks.
The {\em prism} over a graph $G$ is the Cartesian product
\begin{wrapfigure}[6]{r}{.3\textwidth}
\centering
\setlength{\unitlength}{.5mm}
\begin{picture}(50,40)
\put(10,10){\circle*{1.5}}
\put(20,10){\circle*{1.5}}
\put(40,10){\circle*{1.5}}
\put(50,10){\circle*{1.5}}
\put(30,20){\circle*{1.5}}
\put(30,30){\circle*{1.5}}
\put(10,10){\line(1,0){40}}
\put(20,10){\line(1,1){10}}
\put(30,20){\line(1,-1){10}}
\put(30,20){\line(0,1){10}}
\end{picture}
\label{fignoprism}
\end{wrapfigure}
$G\square K_2$ of $G$ with the complete graph $K_2$.
$G$ is called
{\em prism-Hamiltonian} if $G\square K_2$ is Hamiltonian.
If $G$ is Hamiltonian, then $G\square K_2$ is also Hamiltonian, but the converse does not hold in general,
cf. \cite{kaiser2007hamilton}.
As well, this property is stronger than having a $2$-walk: cf. figure on the right, where
we have a
$2K_2$-free graph with  a $2$-walk, but without Hamiltonian prism.

\begin{theorem}\label{thm1}
Every $(1+\epsilon)$-tough $2K_2$-free graph $G$ is prism-Hamiltonian, for any $\epsilon>0$.
Moreover, a Hamiltonian cycle in the prism over $G$ can be found in time polynomial in $|V(G)|$.
\end{theorem}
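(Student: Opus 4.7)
My plan is to combine the paper's earlier polynomial-time construction of an edge-dominating cycle with a matching argument. First, I would compute an edge-dominating cycle $C \subseteq G$ in polynomial time. Writing $I := V(G) \setminus V(C)$, the edge-dominating property forces $I$ to be independent and every $v \in I$ to satisfy $N_G(v) \subseteq V(C)$, since any edge inside $I$ would have no endpoint on $C$ and hence would be undominated.

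Second, I would produce an injection $\phi : I \to V(C)$ with $\phi(v) \in N_G(v)$ via Hall's theorem. The Hall condition $|N_G(S) \cap V(C)| \geq |S|$ for $S \subseteq I$ follows already from $1$-toughness: otherwise deleting $T := N_G(S) \cap V(C)$ would isolate every $v \in S$ into its own component of $G - T$, giving at least $|S|$ components with $|T| < |S|$ vertices removed, contradicting toughness. The strict hypothesis $(1+\epsilon)>1$ yields the stronger conclusion $|V(C)| > |I|$, so $V(C) \setminus \phi(I)$ is nonempty---this slack becomes essential in the last step.

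Third, I would assemble the Hamiltonian cycle of $G \square K_2$. Writing $v^1, v^2$ for the two prism-copies of $v$ and $a_v := \phi(v)$, define the \emph{tab} $T_v := a_v^1\, v^1\, v^2\, a_v^2$, a $3$-path through both copies of $v$. The construction is: first choose a Hamiltonian cycle $B$ of the sub-prism $C \square K_2$ whose used rung set contains $\{a_v^1 a_v^2 : v \in I\}$; then replace each such rung of $B$ by the corresponding tab $T_v$. Each replacement preserves $2$-regularity and connectivity (the tab is a path between the two endpoints of the rung it replaces), so the final subgraph is a single Hamiltonian cycle of $G \square K_2$.

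The hard part is producing $B$: for a prescribed rung set $R$, a Hamiltonian cycle of $C \square K_2$ using exactly the rungs indexed by $R$ exists only when the arc partition of $C$ induced by $R$ satisfies certain parity and separation conditions (in particular, $|R|$ even and no two cyclically adjacent arcs both of length $\geq 2$). If $A := \phi(I)$ fails these conditions, I would augment it with rungs at carefully chosen vertices of $V(C)\setminus A$---splitting any pair of long adjacent arcs by placing a rung at an interior vertex of a long arc, and if needed adding one extra rung to fix parity. This is exactly where $(1+\epsilon)$-toughness, as opposed to plain $1$-toughness, plays its role: it guarantees the spare vertices of $V(C)\setminus A$ required for these adjustments. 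All subroutines---the edge-dominating cycle from the paper's prior result, bipartite matching via Hopcroft--Karp, and the rung-set adjustment with tab substitutions---run in time polynomial in $|V(G)|$, giving the algorithmic conclusion.
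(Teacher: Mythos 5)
Your first two steps (constructing the edge-dominating cycle, the independence of $I$, and the Hall/toughness argument giving the injection $\phi$ with a spare vertex on $C$) match the paper. The gap is in steps three and four. The rung sets realizable by a Hamiltonian cycle of $C\square K_2$ are far more restricted than your ``parity and separation'' condition suggests: at a position whose rung is used, each copy of that vertex must drop exactly one of its two cycle edges, so the unused top (resp.\ bottom) cycle edges form a perfect matching of $R$ into cyclically adjacent pairs. When $R\neq V(C)$, each maximal block of consecutive positions in $R$ is a path, so this matching is unique and the top and bottom copies are forced to drop the \emph{same} edges; one then checks that the resulting $2$-factor is disconnected as soon as $R$ has two blocks, or a single block of size at least $4$. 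For example $R=\{1,2,3,4\}$ in $C_6$ satisfies your stated condition yet yields an $8$-cycle plus a $4$-cycle. The upshot is that the only Hamiltonian cycles of $C\square K_2$ use either exactly two rungs at adjacent positions, or all rungs --- and the latter is possible only when $|V(C)|$ is even. Hence if the edge-dominating cycle has odd length and $|I|\ge 3$ (or $|I|=2$ with non-adjacent images under $\phi$), no augmentation of $\phi(I)$ by spare rungs can succeed; the plan cannot be repaired inside $C\square K_2$ alone.

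This obstruction is precisely what the paper's proof is organized around, and it is why the paper does not take an arbitrary edge-dominating cycle. It uses part (2) of Theorem~\ref{addgen1} to obtain an edge-dominating cycle with three \emph{consecutive} vertices inducing a triangle of $G$; in the odd case, Lemma~\ref{keylem}(2) routes the Hamiltonian cycle through the two chords of that triangle, producing a cycle in the prism over $C$ \emph{plus chords} that uses every rung except the one at the triangle's apex $v_1$. The matching is then taken into $C-\{v_1\}$, and this --- not the existence of spare rungs --- is where $(1+\epsilon)$-toughness is genuinely needed: Hall's condition must survive the deletion of $v_1$. The triangle-free case is handled separately by citing the result that triangle-free $2K_2$-free graphs on at least three vertices are Hamiltonian. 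To salvage your argument you would need either to guarantee an even edge-dominating cycle or to import extra chords of $G$ on $C$, as the paper does.
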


It is worth mentioning that the toughness constant in Theorem~\ref{thm1} is much better
than $\frac{3}{2}$, the lower bound on toughness of a $2K_2$-free graph needed
for its Hamiltonicity, see \cite[Sect.~4]{broersma2014toughness}.

To prove Theorem~\ref{thm2} and Theorem \ref{thm1}, we first prove
a result on edge-dominating subgraphs (a subgraph $S$ of $G$ is called {\em edge-dominating}
if each edge of $G$ contains at least one vertex from $V(S)$).
\begin{theorem}\label{addgen1}
Let $G$ be a $2K_2$-free graph. Then
\begin{enumerate}
\item $G$ admits an edge-dominating cycle (or an edge, or a vertex) $C$;
\item if $G$ contains a triangle, then $G$ admits
an edge-dominating cycle $C$, with three successive vertices on $C$ forming a triangle in $G$.
\end{enumerate}
Moreover, $C$ can be found in time polynomial in $|V(G)|$.
\end{theorem}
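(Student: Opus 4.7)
The plan is to build the cycle from a longest path. Let $P = v_0 v_1 \cdots v_k$ be a longest path in $G$. For $k \leq 1$ the desired cycle/edge/vertex is $P$ itself or an isolated vertex of $G$. So assume $k \geq 2$. Maximality gives $N(v_0) \cup N(v_k) \subseteq V(P)$. I first show $V(G) \setminus V(P)$ is independent: given an edge $xy$ with $x, y \notin V(P)$, applying $2K_2$-freeness to $xy$ and $v_0 v_1$ forces an edge from $\{v_0, v_1\}$ to $\{x, y\}$; since $v_0$ has no outside neighbor, it comes from $v_1$---say $v_1 y \in E(G)$---whence $x y v_1 v_2 \cdots v_k$ is a strictly longer path, a contradiction.

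Next, for $k \geq 3$, I apply $2K_2$-freeness to the disjoint edges $v_0 v_1$ and $v_{k-1} v_k$, obtaining one of the four crossing edges $v_0 v_k$, $v_0 v_{k-1}$, $v_1 v_k$, $v_1 v_{k-1}$. Each immediately gives a cycle $C$ whose vertex set is, respectively, $V(P)$, $V(P)\setminus\{v_k\}$, $V(P)\setminus\{v_0\}$, or $V(P)\setminus\{v_0, v_k\}$ (the last a genuine cycle when $k \geq 4$; the borderline $k=3$ case may degenerate, in which event one falls back to outputting the edge $v_1 v_2$). Since $v_0, v_k$ have neighbors only in $V(P)$ and, in the last subcase, $v_0 v_k \notin E(G)$ (else the first subcase applies), $V(G) \setminus V(C)$ is independent, so $C$ is edge-dominating. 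The small case $k=2$ is handled directly: if $v_0 v_2 \in E$ take the triangle $v_0 v_1 v_2 v_0$, else output the edge $v_0 v_1$, whose complement is independent by the same argument.

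For part~(2), I apply the same scheme to a longest \emph{triangle path}, i.e.\ a path containing three consecutive vertices forming a triangle of $G$; such a path exists since any triangle of $G$ is itself one (of length $2$). The argument that $V(G) \setminus V(P)$ is independent carries over: an outside edge $xy$ yields an extension from either endpoint of $P$, and at least one of the two endpoints is ``far'' from the triangle (for $k \geq 3$), so the triangle survives in the extended path, contradicting maximality. The cycle construction is the same four-case analysis; when the triangle of $P$ sits at position $0$ or $k-2$ and the adjacent endpoint ($v_0$ or $v_k$) is dropped from $C$, I reinsert it between its two triangle-neighbors on $C$: e.g., if the triangle is $v_0 v_1 v_2$ and $v_0 \notin V(C)$, I replace the edge $v_1 v_2 \in E(C)$ by the two-edge path $v_1 v_0 v_2$, producing the consecutive triple $v_1, v_0, v_2$ on the new cycle, which is a triangle since $v_1 v_2 \in E(G)$. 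Edge-domination is preserved because the reinserted endpoint has no neighbors outside $V(P)$.

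For the polynomial-time claim, I replace ``longest'' by ``maximal under the explicit extensions used above'': starting from any edge (for part~(1)) or any triangle (for part~(2)), while the current $P$ admits one of the extensions exhibited in the proof---an outside neighbor of an endpoint, or an outside edge $xy$ together with a suitable edge from $\{v_0, v_1\}$ or $\{v_{k-1}, v_k\}$ to $\{x,y\}$---apply it. Each iteration strictly increases $|V(P)|$, so the process ends in at most $|V(G)|$ polynomial-time steps. I expect the trickiest point to be the endpoint-reinsertion in part~(2); verifying it works reduces to noticing that the triangle itself supplies the chord ($v_0 v_2$ or $v_{k-2} v_k$) needed for the insertion, which is exactly what the definition of ``triangle path'' guarantees.
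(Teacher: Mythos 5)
Your argument is correct in its essentials but takes a genuinely different route from the paper's. The paper never passes through paths: it maintains a cycle and shows that, as long as some edge is undominated, $2K_2$-freeness lets one either lengthen the cycle or swap it for an equally long cycle dominating strictly more edges, terminating after at most $|E(G)|^2$ improvement steps; for part (2) it simply observes that these local surgeries never touch the two cycle edges carrying the triangle. You instead take a longest (or extension-maximal) path $P=v_0\cdots v_k$, prove $V(G)\setminus V(P)$ is independent, and close $P$ into a cycle by applying $2K_2$-freeness to the end-edges $v_0v_1$ and $v_{k-1}v_k$; since the resulting cycle omits at most the two endpoints, whose neighbourhoods lie in $V(P)$, edge-domination follows. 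The four-chord case analysis, the priority ensuring $v_0v_k\notin E(G)$ in the last subcase, and the degenerate $k\le 3$ fallbacks all check out, and the extension-maximal reformulation gives at most $|V(G)|$ iterations, which is if anything a sharper complexity bound than the paper's. For part (2) your reinsertion of a dropped triangle vertex through the chord $v_0v_2$ (or $v_{k-2}v_k$) is sound: it only enlarges $V(C)$, so domination is preserved, and the new consecutive triple is a triangle.

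The one genuine gap is the base case $k=2$ of part (2). If the longest triangle path is the bare triangle $v_0v_1v_2$, your independence argument does not apply: you explicitly rely on an endpoint ``far'' from the triangle, which exists only for $k\ge 3$, so you cannot yet rule out an outside edge $xy$. The fix is in the spirit of your own reinsertion trick: the triangle supplies the edge $v_0v_2$ joining the two endpoints of $P$, and applying $2K_2$-freeness to $xy$ and $v_0v_2$ forces an edge from $\{x,y\}$ to an endpoint of $P$, yielding a longer path (e.g.\ $yxv_0v_1v_2$) on which $v_0,v_1,v_2$ remain consecutive --- contradiction. You should add this move to your list of extension rules so that the polynomial-time procedure also progresses when it starts from a triangle that is not yet edge-dominating. (It is also worth stating explicitly that in part (2) with $k=3$ the degenerate fourth subcase cannot occur, since the triangle's chord is itself one of the four crossing edges and is examined first.) With these small repairs the proof is complete.
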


In fact, in 1983 Veldman \cite{veldman83} has proved the existence of
edge-dominating cycles for $2K_2$-free graphs. However, his proof is based on
contraposition, so it neither tells how to find $C$ in (1), nor
allows to restrict $C$ as in (2).

In the remainder of the paper we provide the proofs, and then discuss related
open questions.

\section{Proof of Theorem \ref{addgen1}}

\subsection{The proof of the first part of Theorem \ref{addgen1}}
If $G$ is a tree, then, as it is $2K_2$-free, it must either have an edge-dominating
vertex, or an edge-dominating edge.
Indeed, in this case, if any two edges intersect, then, as $G$ has no cycles,
they all must intersect in a vertex, which will be dominating. Now, assume that there are
two non-intersecting edges, say $xy$ and $uv$. As they cannot form a $2K_2$, they are on
a 3-path, $xyuv$, without loss of generality. Now we claim that $yu$ is a dominating edge.
Indeed, suppose to the contrary that an edge $ab$ does not intersect $yu$. Then, as they
cannot form $2K_2$, $abyu$ is a 3-path, without loss of generality. As $G$ is a tree, $by$
is the only edge connecting vertices of $ab$ and $yu$. Thus either
$ab$ and $uv$ form a $2K_2$, or $b$ lies in a cycle; in both cases this is a contradiction,
proving that $yu$ is a dominating edge.

Otherwise, $G$ has a cycle, say $C=x_1x_2\cdots x_kx_1$, where $k\ge3$.
If $C$ is edge-dominating, then we are done.
Otherwise
there must be an edge $v_1v_2$ (assume there are $t>0$ such edges),
with neither $v_1$ nor $v_2$ on $C$.
Since $G$ is $2K_2$-free, $v_1$ and $v_2$ have at least two distinct neighbours on $C$.
Let $x_1v_1\in E(G)$ without loss of generality;
\begin{enumerate}
\item if $x_2v_1\in E(G)$, then $C'=x_1v_1x_2x_3\cdots x_kx_1$ is a longer cycle;
\item if $x_2v_2\in E(G)$, then $x_1v_1v_2x_2x_3\cdots x_kx_1$ is a longer cycle;
\item if $x_2v_1,x_2v_2\not\in E(G)$, then applying $2K_2$-freeness to $v_1v_2$ and $x_2x_3$,
we get either $x_3v_1\in E(G)$ or $x_3v_2\in E(G)$.
\begin{enumerate}
\item If $x_3v_2\in E(G)$, then $C'=x_1v_1v_2x_3\cdots x_kx_1$ is a longer cycle;
\item if $x_3v_2\not\in E(G)$, then $x_3v_1\in E(G)$.
\begin{enumerate}
\item if $x_2$ is adjacent to no vertex outside $C$, then use $C'=x_1v_1x_3\cdots x_kx_1$ instead of $C$.
We know that $C$ and $C'$ have the same length, but $C'$ dominates all the edges that
are dominated by $C$, and $C'$ also dominates $v_1v_2$, which is not dominated by $C$. So replacing $C$ by $C'$
decreases $t$.
\item Otherwise $x_2$ is adjacent to a vertex outside $C$, say $z$.
As $x_2$ is not adjacent to $v_1$ or $v_2$, we have $z$ adjacent to either $v_1$ or $v_2$. If $zv_1\in E(G)$, then $C'=x_1v_1zx_2x_3\cdots x_kx_1$ is a longer cycle.
Otherwise $C'=x_1v_1v_2zx_2x_3\cdots x_kx_1$ is a longer cycle.
\end{enumerate}
\end{enumerate}
\end{enumerate}
Repeat the process above. At each iteration either $|V(C)|$ increases, or $t$ decreases.
Thus the process will stop, with $t=0$, in at most $|E(G)|^2$ steps.
\qed

\subsection{The proof of the second part of Theorem  \ref{addgen1}}
The algorithmic procedure for the second part is almost the same, requiring only a minor
modification described below.

Let $G$  contain a triangle $w_1w_2w_3$.  If $w_1w_2w_3$ is edge-dominating,
then there is nothing to prove. Otherwise, there is $u_1u_2\in
E(G)$, with neither $u_1$ nor $u_2$  on $w_1w_2w_3$.  Then, by the
$2K_2$-freeness, we either can connect $w_1w_2w_3$ and $u_1u_2$ together, to get a
5-cycle $C$, with $w_{\pi(1)}$, $w_{\pi(2)}$ and $w_{\pi(3)}$ successive on $C$
for some permutation $\pi$ of $\{1,2,3\}$, or else (without loss in generality)
$u_1$ is adjacent to $w_1$ and $w_2$. In the latter case set $C=u_1w_1w_3w_2$.

If $C$ is edge-dominating, then we are done. Otherwise, we proceed by induction on
$k:=|V(C)|$.
Suppose $k\ge 4$, and there are three successive vertices on $C$, namely $X'$, $X$ and $X''$ forming a triangle in $G$.
Let $v_1v_2\in E(G)$ such that neither $v_1$ nor $v_2$ is on $C$.
We claim that then we can find a cycle $C'$ such that $C'$ dominates more edges than $C$ (perhaps all),
and $X'$, $X$ and $X''$ are also successive on $C'$.

Now we dispense with the case $k=4$. By construction, there is an edge $u_1u_2$
with $u_2$ not in $C=u_1w_1w_3w_2$. If $u_2$ is joined to a vertex $w\neq u_1$ on $C$,
then we use this extra edge to turn $C$ into a 5-cycle through $u_1u_2$
and $w_j$ ($j=1,2,3$). Indeed, as $w_j$ ($j=1,2,3$) form a triangle, they will give,
in some order, three successive vertices $X'XX''$ forming a triangle, as required.
Namely, if $w=w_1$ or $w=w_2$, we will have $X'XX''=w_1w_3w_2$, whereas for $w=w_3$
we will have $X'XX''=w_3w_2w_1$.
Thus, we are left with the case where the only edge joining $u_2$ and $C$ is $u_1u_2$.

First, consider the case $v_2=u_2$. Then $v_1$ must be adjacent to the edge $w_1w_2$,
otherwise $v_1u_2$ and $w_1w_2$ form a $2K_2$. Thus we obtain a 5-cycle $\Omega(u_2)$
through $u_1u_2$, $v_1$, $w_1$, and $w_2$, with $\{X,X',X''\}=\{u_1,w_1,w_2\}$.

It remains to consider the case of $v_i\neq u_2$ ($i=1,2$).
As $u_1u_2$ and $v_1v_2$ cannot form a $2K_2$, and as we already considered
the case of $u_2$ adjacent to a vertex not on $C$, we may assume that
$v_2$ is adjacent to $u_1$. This gives us the already considered configuration,
with $u_2$ replaced by $v_2$, from which we obtain a 5-cycle $\Omega(v_2)$.

\medskip

From now on we can assume $k\geq 5$.
By $2K_2$-freeness, $v_1$ and $v_2$ are adjacent to at least two of $\{X,X',X''\}$,
and thus to at least one of $\{X',X''\}$.  Suppose,
without loss of generality, that $v_1X'\in E(G)$; label the vertices in $C$ in the following way:
$X'$ is labeled by $x_1$; the neighbour of $x_1$ on $C$ distinct from $X$ is labeled by $x_2$;
the other vertices on $C$ are labeled successively, see Figure \ref{labelcycle}.

\begin{figure}[h]
\begin{center}
\begin{tikzpicture}

\filldraw[black](0,0)node[left]{$X'=x_1$} circle(2pt);
\filldraw[black](1.8,0)node[above right]{ $x_2$} circle(2pt);
\filldraw[black](3.5,-0.5)node[above]{ $x_3$} circle(2pt);
\draw[very thick](0,0)--(1.8,0);
\draw[very thick](3.5,-0.5)--(1.8,0);
\draw[very thick](3.5,-0.5)--(4,-1);
\draw[dashed](4,-1)--(4.5,-1.5);

\filldraw[black](-1.2,-1)node[left]{$X=x_k$} circle(2pt);
\draw[very thick](0,0)--(-1.2,-1);

\filldraw[black](-1.2,-2)node[left]{$X''=x_{k-1}$} circle(2pt);
\draw[very thick](-1.2,-2)--(-1.2,-1);
\draw[very thick](-1.2,-2)--(-1,-2.2);
\draw[dashed](-1,-2.2)--(-0.6,-2.6);

\filldraw[black](0,1)node[below right]{ $v_1$} circle(2pt);
\draw[very thick](0,1)--(1.5,1);
\filldraw[black](1.5,1)node[below right]{ $v_2$} circle(2pt);
\draw[very thick](0,1)--(0,0);
\draw[thin](0,0)--(-1.2,-2);

\end{tikzpicture}
\end{center}
\caption{The cycle $C$ of length $k$}\label{labelcycle}
\end{figure}
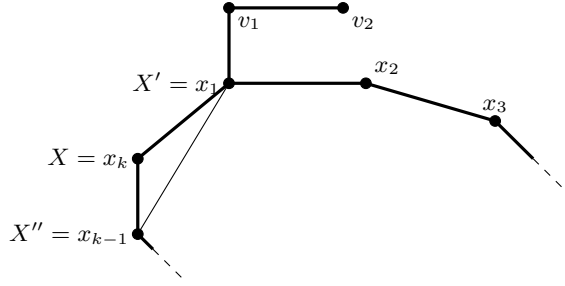

Note that for $k\geq 5$ the operation used
in the proof of the first part of Theorem \ref{addgen1}
of replacing  $C$ by $C'$ (enlarging $|V(C)|$ or reducing $t$)
does not touch the edges $x_{k-1}x_k$ and $x_kx_1$. Thus the triangle-forming
vertices $X''$, $X$ and $X'$ are always successive on $C$ in our process.
Then they are on the edge-dominating cycle we obtain there.  \qed

\section{Proof of Theorem \ref{thm2}}

Combining Theorem \ref{addgen1} and the following
Lemma \ref{addtec}, we obtain Theorem \ref{thm2}.

\begin{lemma}\label{addtec}
Let $k\geq 2$.
If $G$ has an edge-dominating cycle $C$ (or an edge, or a vertex)
and if $G$ is $\frac{1}{k-1}$-tough then $G$ admits a $k$-walk.
Moreover, the latter can be found in time polynomial in $|V(G)|$.
\end{lemma}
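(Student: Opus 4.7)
The plan is to use $C$ as the backbone of a $k$-walk and attach each vertex of $V(G)\setminus V(C)$ as a ``pendant'' via a doubled edge to a suitable vertex of $C$. Write $I := V(G) \setminus V(C)$. Since $C$ is edge-dominating, no edge of $G$ has both endpoints in $I$, so $I$ is an independent set and every $v \in I$ has all its $G$-neighbours in $V(C)$.

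Consider the bipartite auxiliary graph $H$ with parts $I$ and $V(C)$ whose edges are the $G$-edges between the two parts. I seek a function $\phi \colon I \to V(C)$ with $v\phi(v) \in E(H)$ for every $v\in I$ and $|\phi^{-1}(u)| \le k-1$ for every $u \in V(C)$. Given such a $\phi$, take the spanning multigraph $W$ consisting of the edges of $C$ together with two parallel copies of $v\phi(v)$ for each $v\in I$. Then $W$ is connected and spanning; every $v \in I$ has degree $2$ in $W$; every $u \in V(C)$ has even degree $2 + 2|\phi^{-1}(u)| \le 2k$; hence $W$ is a $k$-walk.

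Finding $\phi$ amounts to a capacitated bipartite matching problem (each $u\in V(C)$ has capacity $k-1$, each $v\in I$ must be matched), which is solvable in polynomial time by max-flow and, by the defect form of Hall's theorem, admits a solution iff $(k-1)\,|N_H(S)| \ge |S|$ for every $S\subseteq I$. This is where the toughness hypothesis enters. Fix $S\subseteq I$ with $|S|\ge 2$ and let $T := N_H(S)\subseteq V(C)$. In $G - T$ every vertex of $S$ is isolated, since $I$ is independent and all neighbours of $S$ lie in $T$; hence $G - T$ has at least $|S|\ge 2$ components, and $\frac{1}{k-1}$-toughness forces $|T|\ge |S|/(k-1)$, which is the desired inequality. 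For $|S|=1$ the inequality reduces to $|N_H(v)|\ge 1$, which holds since $G$ is connected (implied by any positive toughness) and $N_G(v)\subseteq V(C)$.

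The degenerate cases are handled analogously. If $C$ is a single edge $u_1u_2$, initialise $W$ with two copies of $u_1u_2$ so that each $u_i$ starts with degree $2$, and run the same argument. If $C$ is a single vertex $v$, then edge-domination forces $G$ to be a star centred at $v$, and toughness $\frac{1}{k-1}$ bounds the number of leaves by $k-1$, so attaching each leaf by a doubled edge to $v$ yields the $k$-walk directly. The only genuinely non-routine step is the translation from toughness to the Hall-type inequality, which is given above; the rest is bookkeeping and a standard bipartite matching computation.
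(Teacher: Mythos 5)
Your proposal is correct and follows essentially the same route as the paper: treat $V(G)\setminus V(C)$ as an independent set, use toughness to verify the defect Hall condition with capacities $k-1$ on the cycle vertices, and form the $k$-walk from $C$ plus doubled pendant edges. You spell out the toughness-to-Hall translation and the degenerate cases more explicitly than the paper does, but the underlying argument is identical.
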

\begin{proof}
The induced subgraph $D=G-C$ is an independent set. Define an auxiliary bipartite
graph $\Gamma$, with one part $V(D)$ and the other consisting of $k-1$ copies of $V(C)$,
so that each edge $dc\in V(D)\times V(C)$ corresponds to $k-1$ edges
$dc_1,\dots dc_{k-1}$ of $\Gamma$. Let $\Phi$ denote the $(k-1)$-to-$1$ map
$\Phi: E(\Gamma)\to E(G)$ sending each $dc_j$, for $1\leq j\leq k-1$, to $dc=\Phi(dc_j)$.

For any $D'\subset D$, by $\frac{1}{k-1}$-toughness, $D'$ has at least
$\lceil\frac{|D'|}{k-1}\rceil$ neighbours in $C$. Thus $D'$ has at
least $|D'|$ neighbours in $\Gamma$.  By Hall's Theorem \cite[Theorem 16.4]{bomu08} applied
to $\Gamma$, it has a matching $M$ saturating $D$; i.e.  each $v\in V(D)$ is
incident to an $e\in M$, and each $v$ in the other part of $\Gamma$ is
incident to at most one $e\in M$.

Hence for each $e\in\Phi(M)\subset E(G)$ we have $e\in D\times C$.
Moreover, each $v\in V(D)$ is incident to
exactly one $e\in \Phi(M)$, while each $v\in V(C)$ is incident to at most
$k-1$ edges in $\Phi(M)$. Then the (doubled) edges in $\Phi(M)$ and the edges
in the edge-dominating cycle (respectively, the doubled edge in the case of existence of a dominating edge) $C$ form a
$k$-walk in $G$.

To show the last claim, it suffices to notice that $M$ (a maximum matching
in a bipartite graph) can be found in time polynomial
in $|V(G)|$. \qed
\end{proof}

\section{Proof of Theorem \ref{thm1}}
The following lemma is the key technique in the proof of Theorem \ref{thm1}.
\begin{lemma}\label{keylem}
Let $G$ be $(1+\epsilon)$-tough, for some $\epsilon>0$.
\begin{enumerate}
\item If $G$ contains an edge-dominating cycle $C$ with even number of vertices, then the prism over $G$ is Hamiltonian.
\item If $G$ contains an edge-dominating cycle $C=v_1v_2\cdots v_{2p+1}v_1$ of odd length, and there are three vertices $v_1$, $v_{2q}$ and $v_{2q+1}$, for some $1\le q\le p$, inducing a triangle in $G$, then the prism over $G$ is Hamiltonian.
\end{enumerate}
\end{lemma}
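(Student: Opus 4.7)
The plan is to lift a suitable closed spanning walk of $G$ to a Hamilton cycle in the prism; write $v^+,v^-$ for the two copies of $v\in V(G)$. Since $C$ is edge-dominating and $G$ is $2K_2$-free, the set $D:=V(G)\setminus V(C)$ is independent, and every $d\in D$ has all its neighbours on $C$. The first step is to use $(1+\epsilon)$-toughness to show that for any fixed $w\in V(C)$ and any $D_0\subseteq D$ one has $|N(D_0)\setminus\{w\}|\ge|D_0|$. Indeed, for $|D_0|\ge2$ the toughness bound applied to the cut $N(D_0)$ gives $|N(D_0)|\ge(1+\epsilon)|D_0|>|D_0|$, hence $|N(D_0)|\ge|D_0|+1$; for $|D_0|=1$, toughness strictly greater than $1$ forbids a single vertex from separating $d$ from the rest of $G$, forcing $|N(d)|\ge 2$. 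Hall's Theorem then produces an injection $f\colon D\to V(C)\setminus\{w\}$ with $df(d)\in E(G)$ for all $d$.

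For part~(1), let $C=c_1c_2\cdots c_{2m}c_1$ and take any such $f$. Starting at $c_1^+$ we traverse $C$ once, performing a \emph{side-switch} at each $c_i$: if $c_i=f(d)$ the switch is the three-edge detour $c_i^s\,d^s\,d^{-s}\,c_i^{-s}$; otherwise it is the vertical edge $c_i^s c_i^{-s}$. Between consecutive switches we use the $C$-edge in the copy we are currently in. Since $|V(C)|=2m$ is even, the total number of copy flips is even, the walk closes up at $c_1^+$, and every prism vertex is visited exactly once.

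For part~(2) the obstruction is parity: traversing an odd $C=v_1\cdots v_{2p+1}v_1$ with a switch at each vertex would land on the wrong copy of $v_1$. I exploit the triangle $v_1v_{2q}v_{2q+1}$ to reroute through $v_1$ a second time by using the closed walk
\[
 W\colon\; v_1,v_2,\dots,v_{2q},v_1,v_{2q+1},\dots,v_{2p+1},v_1,
\]
whose edges are those of $C$ together with the two triangle chords $v_{2q}v_1$ and $v_1v_{2q+1}$. The walk has even length $2p+2$, visits $v_1$ twice and every other $v_i$ once. Choose $w:=v_1$ in the matching step so that $f$ avoids $v_1$. At each $v_i\neq v_1$ occurrence in $W$ we perform the same switch operation as in part~(1); at each of the two $v_1$-occurrences we simply pass through in a single copy, using no vertical step. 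A direct parity check then shows that the two $v_1$-visits use \emph{different} copies of $v_1$ and the walk closes up correctly; all prism vertices are covered exactly once.

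The main delicate point is the parity bookkeeping for part~(2) together with the requirement that $f$ avoid $v_1$; that is precisely the slack $(1+\epsilon)$-toughness buys over mere $1$-toughness, and is the unique place where $\epsilon>0$ is essential. Every step is constructive and polynomial-time, so combined with Theorem~\ref{addgen1} this will yield a polynomial-time Hamilton cycle construction in the prism.
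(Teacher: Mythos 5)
Your proof is correct and is essentially the paper's own argument: both use Hall's Theorem with the toughness hypothesis to match the independent set $V(G)\setminus V(C)$ into $C$ (avoiding $v_1$ in the odd case), build the zigzag Hamiltonian cycle in the prism over $C$ (rerouted through $v_1$ a second time via the two triangle chords when $|C|$ is odd), and insert each matched outside vertex by replacing the vertical edge at its partner with a three-edge detour. Your ``closed even walk with side-switches'' phrasing is just a cleaner packaging of the explicit cycle the paper writes down.
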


\begin{proof}
For the first part (see Figure~\ref{fig:evenprism}),
denote $C=v_1v_2\cdots v_{2p}v_1$. The set
$D=V(G)-V(C)$ of vertices outside $C$
is an independent set. By Hall's Theorem and 1-toughness, there is a matching $M$ between
$D$ and $C$. That means that for any vertex $u_j$ in $D$, there is a vertex $v_{i_j}$ on $C$ adjacent to $u_j$ in $M$.

Obviously, we have a Hamiltonian cycle in $\bar{C}$, the prism over $C$, namely $$v_1v'_1v'_2v_2\cdots v_{2p-1}v'_{2p-1}v'_{2p}v_{2p}v_1.$$ Now, we change every  $v_{i_j}v'_{i_j}$ (or $v'_{i_j}v_{i_j}$) into $v_{i_j}u_ju'_jv'_{i_j}$ (or $v'_{i_j}u'_ju_jv_{i_j}$) to get a Hamilton cycle in $\bar{G}$.

\begin{figure}[h]
\begin{center}
\begin{tikzpicture}[scale=2]
\draw(-2,0)node[below left]{$v_{2p}$}--(-1,0);\draw(-0.5,0)--(0.5,0);\draw(1,0)--(2,0);
\draw(-2,0.6)node[below left]{$v'_{2p}$}--(-1,0.6);\draw(-0.5,0.6)--(0.5,0.6);\draw(1,0.6)--(2,0.6);
\filldraw[black](0,0)node[below left]{$v_{i_j}$} circle(1pt);
\filldraw[black](0,0.6)node[below left]{$v'_{i_j}$} circle(1pt);
\filldraw[black](-2,0.6) circle(1pt);
\filldraw[black](2,0.6)node[below left]{$v'_1$} circle(1pt);
\filldraw[black](-1.5,0.6)node[below right]{$v'_{2p-1}$} circle(1pt);
\filldraw[black](1.55,0.6)node[below left]{$v'_2$} circle(1pt);
\filldraw[black](2,0)node[below left]{$v_1$} circle(1pt);
\filldraw[black](-2,0) circle(1pt);
\filldraw[black](1.55,0)node[below left]{$v_2$} circle(1pt);
\filldraw[black](-1.5,0)node[below right]{$v_{2p-1}$} circle(1pt);
\draw[dashed](-1,0)--(-0.5,0);
\draw[dashed](1,0)--(0.5,0);
\draw[dashed](-1,0.6)--(-0.5,0.6);
\draw[dashed](1,0.6)--(0.5,0.6);
\filldraw[black](1.1,0)node[below left]{$v_3$} circle(1pt);
\filldraw[black](1.1,0.6)node[below left]{$v'_3$} circle(1pt);
\filldraw[black](0.2,0.4)node[below right]{$u_j$} circle(1pt);
\filldraw[black](0.2,1)node[below right]{$u'_j$} circle(1pt);
\draw[very thick](2,0)--(2,0.6)--(1.55,0.6)--(1.55,0)--(1.1,0)--(1.1,0.6);
\draw[very thick](0,0)--(0.2,0.4)--(0.2,1)--(0,0.6);
\draw[dashed, very thick](0,0)--(0,0.6);
\draw[very thick](-1.5,0)--(-1.5,0.6)--(-2,0.6)--(-2,0);
\draw[very thick](-2,0)..controls(0,-0.5)..(2,0);
\draw(-2,0.6)..controls(0,0.9)..(2,0.6);
\node at(0.2,-0.5){$G$};\node at (-0.4,0.9){$G'$};
\end{tikzpicture}
\end{center}
\caption{$G$ has an edge-dominating cycle of even length\label{fig:evenprism}}
\end{figure}
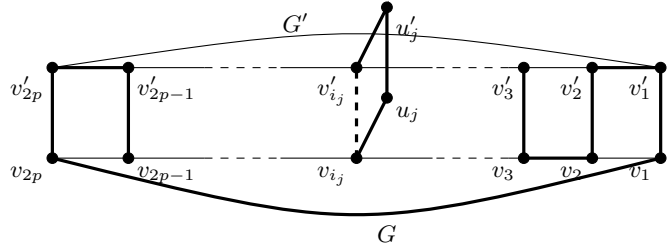

For the second part, denote $C=v_1v_2\cdots v_{2p+1}v_1$ (see Figure \ref{fig2}).
The set $D=V(G)-V(C)$ of vertices outside $C$
is an independent set. By Hall's Theorem, and $(1+\epsilon)$-toughness,
there is a matching $M$ between $D$ and $C-\{v_1\}$.  This means that for any vertex $u_j$ in $D$,
there is a vertex $v_{i_j}$ in $C-\{v_1\}$ adjacent to $u_j$ in $M$.

Clearly, we have a Hamiltonian cycle in $\bar{C}$, namely $$v_1v_2v'_2v'_3v_3\cdots v_{2q-1}v_{2q}v'_{2q}v'_1v'_{2q+1}v_{2q+1}\cdots v_{2p+1}v_1.$$
Now, we change every  $v_{i_j}v'_{i_j}$ (or $v'_{i_j}v_{i_j}$) into $v_{i_j}u_ju'_jv'_{i_j}$ (or $v'_{i_j}u'_ju_jv_{i_j}$) to get a Hamilton cycle in $\bar{G}$. \qed
\end{proof}

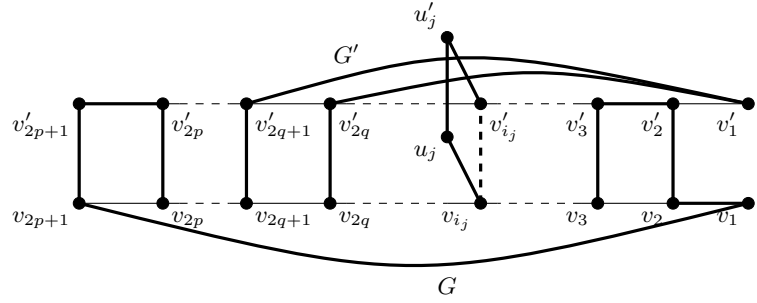
\begin{figure}[h]
\begin{center}
\begin{tikzpicture}[scale=2.2]
\draw(-2,0)node[below left]{$v_{2p+1}$}--(-1.4,0);\draw(1,0)--(2,0);
\draw(-2,0.6)node[below left]{$v'_{2p+1}$}--(-1.4,0.6);\draw(1,0.6)--(2,0.6);
\filldraw[black](0.4,0)node[below left]{$v_{i_j}$} circle(1pt);
\filldraw[black](0.4,0.6)node[below right]{$v'_{i_j}$} circle(1pt);
\filldraw[black](-2,0.6) circle(1pt);
\filldraw[black](2,0.6)node[below left]{$v'_1$} circle(1pt);
\filldraw[black](-1.5,0.6)node[below right]{$v'_{2p}$} circle(1pt);
\filldraw[black](1.55,0.6)node[below left]{$v'_2$} circle(1pt);
\filldraw[black](2,0)node[below left]{$v_1$} circle(1pt);
\filldraw[black](-2,0) circle(1pt);
\filldraw[black](1.55,0)node[below left]{$v_2$} circle(1pt);
\filldraw[black](-1.5,0)node[below right]{$v_{2p}$} circle(1pt);
\draw[dashed](-1.4,0)--(-1.1,0);\draw(-1.1,0)--(-0.4,0);\filldraw[black](-1,0)node[below right]{$v_{2q+1}$} circle(1pt);
\draw[dashed](1,0)--(0.5,0);
\draw[dashed](-1.4,0.6)--(-1.1,0.6);\draw(-1.1,0.6)--(-0.4,0.6);\filldraw[black](-1,0.6)node[below right]{$v'_{2q+1}$} circle(1pt);
\draw[dashed](1,0.6)--(0.5,0.6);
\filldraw[black](1.1,0)node[below left]{$v_3$} circle(1pt);
\filldraw[black](1.1,0.6)node[below left]{$v'_3$} circle(1pt);
\filldraw[black](0.2,0.4)node[below left]{$u_j$} circle(1pt);
\filldraw[black](0.2,1)node[above left]{$u'_j$} circle(1pt);
\draw[very thick](2,0)--(1.55,0)--(1.55,0.6)--(1.1,0.6)--(1.1,0);
\draw[very thick](0.4,0)--(0.2,0.4)--(0.2,1)--(0.4,0.6);
\draw[dashed, very thick](0.4,0)--(0.4,0.6);
\draw[very thick](-1.5,0)--(-1.5,0.6)--(-2,0.6)--(-2,0);
\draw[very thick](-2,0)..controls(0,-0.5)..(2,0);

\node at(0.2,-0.5){$G$};\node at (-0.4,0.9){$G'$};
\draw[dashed](-0.4,0)--(0.2,0);\draw(0.2,0)--(0.5,0);
\draw[dashed](-0.4,0.6)--(0.2,0.6);\draw(0.2,0.6)--(0.5,0.6);
\filldraw[black](-0.5,0)node[below right]{$v_{2q}$} circle(1pt);
\filldraw[black](-0.5,0.6)node[below right]{$v'_{2q}$} circle(1pt);
\draw[very thick](-0.5,0.6)..controls(0.7,0.85)..(2,0.6);
\draw[very thick](-1,0.6)..controls(0.3,0.97)..(2,0.6);
\draw[very thick](-0.5,0.6)--(-0.5,0);
\draw[very thick](-1,0.6)--(-1,0);
\end{tikzpicture}
\end{center}
\caption{$G$ has an edge-dominating cycle of odd length}\label{fig2}
\end{figure}

Now we complete the proof of Theorem \ref{thm1}.
Suppose $G$ is a triangle-free $2K_2$-free graph. By \cite[Theorem 4]{broersma2014toughness},
if $|V(G)|\geq 3$ then
$G$ is Hamiltonian,\footnote{
We only need to rely on  \cite[Theorem 4]{broersma2014toughness}
for the case of $G$ not having a dominating cycle $C$ of even length, for
otherwise we have case 1 of Lemma \ref{keylem} at our disposal; $|C|$ can be odd
only in the case of $G$ not bipartite; such a $G$ is, by \cite[Lemma 2]{broersma2014toughness},
of $C_5^*$-type, i.e. it has a rather special structure.}
and so prism-Hamiltonian.
A polynomial-time algorithm to construct a Hamiltonian cycle in $G$ is
implicit in the proof of  \cite[Theorem 4]{broersma2014toughness}, and
building up a Hamiltonian cycle in the prism over $G$ from a Hamiltonian
cycle $C=v_1v_2\cdots v_{m}v_1$ in $G$ is trivially (and in time polynomial in $|V(G)|$)
done by concatenating the path $v_1\dots v_{m-1}v_m$, i.e. $C$ without the last edge, with the
path $v'_m v'_{m-1}\cdots v'_1$.

If $|V(G)|=2$, i.e. $G$ is a single edge, and obviously prism-Hamiltonian.
Finally, if $G$ is not triangle-free then we are done by
Theorem \ref{addgen1} (2) and Lemma \ref{keylem}, and noting that the
corresponding construction can be done in  time polynomial in $|V(G)|$.
\qed

\section{Concluding remarks}

Lemma \ref{addtec} can be used to prove existence of 2-walks in
classes of graphs wider than $2K_2$-free.
For instance, it is immediate from \cite[Corollary 3.2]{veldman83} that each 2-connected
$3K_2$-free graph admits  an edge-dominating cycle. From the latter and Lemma \ref{addtec},
it is easy to obtain the following.
\begin{theorem}\label{2w3k2f}
Let $G$ be a 1-tough $3K_2$-free graph. Then $G$ admits a 2-walk. \qed
\end{theorem}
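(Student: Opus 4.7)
The plan is to compose two ingredients that are already available: Veldman's structural result on $3K_2$-free graphs and Lemma \ref{addtec} applied with $k=2$. First I would verify the harmless observation that every 1-tough graph on at least three vertices is 2-connected; indeed, if a single vertex were a cut, then $G$ would split into $p\ge 2$ components after removing only $1<p\cdot 1$ vertices, contradicting 1-toughness. This puts $G$ into the scope of \cite[Corollary 3.2]{veldman83}, so long as $|V(G)|\ge 3$, producing an edge-dominating cycle $C\subseteq G$.

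Next I would feed $C$ into Lemma \ref{addtec} with $k=2$: since $G$ is $\tfrac{1}{k-1}=1$-tough and has the edge-dominating cycle $C$, the lemma builds a 2-walk from the doubled cycle together with a Hall-matched edge from each vertex of $V(G)\setminus V(C)$ to $V(C)$. The two degenerate cases $|V(G)|\le 2$ (an isolated vertex or a single edge) are themselves trivially 2-walks and are exactly the ``edge, or a vertex'' alternatives already handled by Lemma \ref{addtec}.

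Since the whole argument is a composition of two prior results, I do not anticipate any genuine technical obstacle. The only points that require attention are bookkeeping: confirming that 2-connectedness (implied by 1-toughness) really is the hypothesis needed for Veldman's Corollary 3.2 in the $3K_2$-free setting, and that the toughness parameter $1$ matches $\tfrac{1}{k-1}$ for $k=2$ in Lemma \ref{addtec}. Neither step involves any new combinatorial idea beyond what is already developed in Sections 2 and 3.
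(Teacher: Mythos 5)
Your proposal is correct and follows exactly the route the paper takes: it derives an edge-dominating cycle from \cite[Corollary 3.2]{veldman83} (using that 1-toughness gives 2-connectedness) and then applies Lemma \ref{addtec} with $k=2$. The extra care you take with the degenerate cases $|V(G)|\le 2$ is fine and consistent with the ``edge or vertex'' alternatives built into the lemma.
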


It would be interesting to find out whether Theorem~\ref{2w3k2f} and similar results
of this type can be made
effective. Towards this end, we would like to propose the following
\begin{conjecture}
Let $\ell\geq 2$ be a fixed constant.
Then for the $\ell-1$-connected $\ell K_2$-free
graphs there is a polynomial time algorithm finding an edge-dominating cycle.
\end{conjecture}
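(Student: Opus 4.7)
The plan is to extend the iterative improvement scheme used for Theorem~\ref{addgen1}(1) by combining a suitable potential function with local moves derived from $\ell K_2$-freeness and $(\ell-1)$-connectivity. Let $\Phi(C) = (|V(C)|, -t(C))$, where $t(C)$ is the number of edges of $G$ not dominated by the cycle $C$, ordered lexicographically. First I would initialize $C$ as any cycle in $G$ (which exists in any $(\ell-1)$-connected graph on at least $3$ vertices as soon as $\ell - 1 \geq 1$), and then repeatedly seek a local modification that strictly increases $\Phi$. Since $\Phi$ is bounded by $(|V(G)|, 0)$, termination after $O(|V(G)|^2)$ iterations would be automatic once the inductive step is in place.

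For the inductive step, assume $C$ is not edge-dominating, so there is a witness edge $e = v_1 v_2$ with $v_1, v_2 \notin V(C)$. By $(\ell-1)$-connectivity there are $\ell - 1$ internally vertex-disjoint paths from $\{v_1, v_2\}$ to $V(C)$, which spreads the attachments; any intermediate undominated vertex on such a path can be absorbed into $C$ in a preliminary pass that already increases $|V(C)|$, so we may assume $v_1, v_2$ each have neighbours on $C$ directly. Next I would exploit $\ell K_2$-freeness to force a density condition on neighbourhoods: given any $\ell - 1$ pairwise non-adjacent edges of $C$ together with $e$, at least one pair among these $\ell$ edges must be joined, and by selecting the $\ell - 1$ edges of $C$ to be pairwise far apart on $C$ one deduces that a neighbour of $\{v_1, v_2\}$ occurs in every sufficiently long stretch of $C$.

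Armed with this density I would carry out the analog of the case analysis in the proof of Theorem~\ref{addgen1}(1): either two consecutive vertices of $C$ are both adjacent to some $v_i$, enabling the insertion move that strictly enlarges $|V(C)|$; or two consecutive vertices of $C$ are adjacent to $v_1$ and $v_2$ respectively, enabling the rerouting move that covers $e$ and strictly decreases $t(C)$. Each iteration examines $O(|V(G)|^{\ell})$ configurations of up to $\ell$ disjoint edges, which is polynomial in $|V(G)|$ for fixed $\ell$. Together with Lemma~\ref{addtec} this would also yield effective $k$-walks in the corresponding toughness regime.

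The hard part will be the safe-substitution subcase when neither insertion nor direct rerouting is available: for $\ell = 2$ this is exactly subcase 3(b)i in the proof of Theorem~\ref{addgen1}(1), which hinges on the existence of a vertex $x_2 \in V(C)$ whose sole neighbours outside $C$ are already dominated by the rest of $C$. Exhibiting such a vertex constructively for general $\ell$ — thereby turning Veldman's contrapositive argument from \cite{veldman83} into a polynomial-time witness — is the principal technical obstacle. A promising route is to phrase the search as a structured LP over candidate swap pairs, where the $\ell K_2$-free structure constrains the fractional dual enough to guarantee an integral swap; establishing this integrality is where I expect most of the work to lie.
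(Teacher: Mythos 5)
The statement you are addressing is presented in the paper as a \emph{conjecture}: the authors explicitly leave it open, and the only settled cases are $\ell=2$ (Theorem~\ref{addgen1} itself) and the nonconstructive existence result for $2$-connected $3K_2$-free graphs inherited from Veldman. So there is no proof in the paper to compare yours against; the question is whether your proposal closes the gap, and it does not. By your own admission the ``safe-substitution'' subcase --- the analogue of subcase 3(b)i, where neither an insertion nor a direct rerouting move is available --- is left unresolved, and the LP/integrality route you sketch for it is speculative. Since that subcase is precisely where the $\ell=2$ argument uses the full strength of $2K_2$-freeness, and where Veldman's argument for general $\ell$ retreats to contraposition, this is not a loose end to be filled in later: it is the entire difficulty the conjecture is about.

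There is also a concrete flaw earlier in the plan. You claim that choosing $\ell-1$ pairwise far-apart edges of $C$ together with $e=v_1v_2$ and invoking $\ell K_2$-freeness forces a neighbour of $\{v_1,v_2\}$ in every sufficiently long stretch of $C$. But $\ell K_2$-freeness only says these $\ell$ edges do not induce a copy of $\ell K_2$; for $\ell\geq 3$ the induced copy can be destroyed by a chord of $G$ joining two of the chosen edges of $C$, which tells you nothing about how $e$ attaches to $C$. (For $\ell=2$ there is a single $C$-edge in the configuration, so the violating edge must meet $e$ --- that is exactly why the paper's case analysis works there.) To extract attachment information you would need the $\ell-1$ edges of $C$ to be pairwise non-adjacent \emph{in $G$}, and when no such independent system of $C$-edges exists you would have to exploit that density separately; neither branch is carried out. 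A smaller issue: for $\ell=2$ a $1$-connected graph need not contain any cycle (the paper treats $2K_2$-free trees as a special case with a dominating vertex or edge), so even the initialization step needs care. In short, the proposal reproduces the easy parts of the $\ell=2$ argument but does not supply the new idea that the conjecture is asking for.
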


Of independent interest would be finding out whether more general results from \cite{veldman83},
in particular Theorem~\ref{thm:veld}, can be made algorithmic.
\begin{theorem}\label{thm:veld}
{\rm \cite[Theorem~3]{veldman83}.} Let $G$ be an $\ell-1$-connected graph such that
for every induced $\ell K_2$-subgraph $H$ of $G$ one has the
sum of degrees of vertices in $H$ at least $\frac{(\ell-1)(|V(G)-\ell +1)}{2}$.
Then $G$ has  an edge-dominating cycle. \qed
\end{theorem}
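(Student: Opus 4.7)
The plan is a proof by contradiction via the classical ``longest cycle'' machinery that underpins Ore-type degree conditions. Suppose $G$ satisfies the hypothesis but contains no edge-dominating cycle. If $\ell\geq 3$ then $G$ is at least $2$-connected and so contains a cycle; the degenerate case $\ell=2$, where $G$ is merely connected, can be dispatched separately by first showing that the $2K_2$ degree-sum condition forces the existence of a cycle, or by working with a longest path instead. In the main case, let $C$ be a longest cycle in $G$ and, using the failure of edge-domination, fix an edge $u_1u_2\in E(G)$ with $u_1,u_2\notin V(C)$.

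Next I would invoke Menger's Theorem: since $G$ is $(\ell-1)$-connected, there exist $\ell-1$ internally vertex-disjoint paths $P_1,\dots,P_{\ell-1}$ from $\{u_1,u_2\}$ to $V(C)$, landing at pairwise distinct vertices $w_1,\dots,w_{\ell-1}$. Fix an orientation of $C$ and write $w_i^+$ for the successor of $w_i$. The standard longest-cycle argument shows that, for every $i\neq j$, one can splice $P_i$ and $P_j$ with two arcs of $C$ into an alternative cycle through $u_1u_2$; the maximality of $C$ therefore forbids many edges among the set $\{u_1,u_2,w_1^+,\dots,w_{\ell-1}^+\}$, as well as between these vertices and certain ``bad'' regions of $V(C)$.

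The heart of the proof is to translate these forbidden edges into a concrete induced $\ell K_2$ subgraph $H$, consisting of the $\ell-1$ edges $w_iw_i^+$ on $C$ together with the off-cycle edge $u_1u_2$, and then to bound the total degree of its $2\ell$ vertices. Each vertex of $V(H)$ must avoid a sizeable portion of $V(G)$: the $w_i^+$ avoid each other and both $u_k$, while $u_1,u_2$ avoid every $w_j$ and further neighbours forced by the maximality of $C$. Summing these constraints should yield
\[
\sum_{v\in V(H)}d_G(v)\;<\;\frac{(\ell-1)\bigl(|V(G)|-\ell+1\bigr)}{2},
\]
in direct contradiction with the hypothesis of the theorem.

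The main obstacle will be the combinatorial bookkeeping needed to certify that the chosen $\ell$ edges truly induce an $\ell K_2$ rather than merely a vertex-disjoint matching: attachment points $w_i$ may be close together on $C$, and the paths $P_i$ themselves can create short chords between their interior vertices and $V(C)$. One has to reselect ``canonical'' attachment points, in the spirit of Bondy's insertion lemma, so that the resulting edges span no extra edges, and simultaneously so that the forbidden-neighbour counts remain disjoint enough to sum into the strict inequality above. Once this delicate choice is made, the degree-sum contradiction is essentially routine counting.
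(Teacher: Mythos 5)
First, note that the paper does not prove this statement at all: it is quoted verbatim from Veldman's 1983 paper as a known result (the \qed marks it as cited, not proved), so there is no in-paper argument to compare against. Your outline does follow the general strategy of Veldman's original proofs of such theorems (take a longest, or otherwise extremal, cycle, assume an undominated edge exists, use $(\ell-1)$-connectivity to produce disjoint paths to the cycle, and contradict the degree-sum hypothesis), so the overall direction is reasonable. But as written it is a plan rather than a proof, and the two steps you defer are exactly the ones that carry all the difficulty.

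The first genuine gap is the construction of the induced $\ell K_2$. The hypothesis only constrains \emph{induced} copies of $\ell K_2$, so to reach a contradiction you must exhibit an honest induced one with small degree sum. Your candidate $H$ consists of $u_1u_2$ together with the cycle edges $w_iw_i^+$, but the standard longest-cycle lemmas forbid only edges among the successors $w_i^+$ and between the $w_i^+$ and $\{u_1,u_2\}$; they say nothing about edges between $w_i$ and $w_j$, or between $w_i$ and $w_j^+$, or between $w_i$ and $u_k$ beyond the chosen attachment, and any such edge destroys inducedness. You flag this as "the main obstacle" and gesture at reselecting canonical attachment points "in the spirit of Bondy's insertion lemma," but you do not show that such a reselection exists, and it is not clear that it does without substantially restructuring the argument (Veldman's actual treatment works with $D_\lambda$-cycles and a more carefully chosen extremal object). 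The second gap is the counting: you assert that summing the forbidden-neighbour constraints yields $\sum_{v\in V(H)}d_G(v)<\frac{(\ell-1)(|V(G)|-\ell+1)}{2}$, but this specific bound (average degree roughly $|V(G)|/4$ over $2\ell$ vertices) is not a consequence of any inequality you actually derive; without exhibiting, for each vertex of $H$, an explicit set of non-neighbours and verifying that these sets overlap little enough, the final inequality is unsupported. Until both of these are carried out, the argument does not constitute a proof.
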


\subsection*{Acknowledgements.}
The authors thank Nick Gravin, Edith Elkind, and the anonymous referee
for helpful comments on versions of this text.
Research supported by Singapore MOE Tier 2 Grant MOE2011-T2-1-090 (ARC 19/11)
and by the EU Horizon 2020 research and innovation programme, grant agreement
OpenDreamKit No 676541.

\bibliography{reftough}
\bibliographystyle{splncs03}
\end{document}